\numberwithin{equation}{section}
\numberwithin{figure}{section}
\theoremstyle{plain}
\newtheorem{thm}{\protect\theoremname}
\theoremstyle{plain}
\newtheorem{lem}[thm]{\protect\lemmaname}
\theoremstyle{plain}
\newtheorem{cor}[thm]{\protect\corollaryname}
\theoremstyle{definition}
\newtheorem{defn}[thm]{\protect\definitionname}
\theoremstyle{definition}
\newtheorem{example}[thm]{\protect\examplename}
\numberwithin{equation}{section}
\theoremstyle{plain}
\theoremstyle{definition}
\newcommand{\xyR}[1]{%
\makeatletter
\xydef@\xymatrixrowsep@{#1}
\makeatother
} 
\newcommand{\xyC}[1]{%
\makeatletter
\xydef@\xymatrixcolsep@{#1}
\makeatother
} 
\providecommand{\corollaryname}{Corollary}
\providecommand{\definitionname}{Definition}
\providecommand{\examplename}{Example}
\providecommand{\lemmaname}{Lemma}
\providecommand{\theoremname}{Theorem}
\begin{document}
\title{Connected monads weakly preserve products}
\author{H.Peter Gumm}
\address{Philipps-Universität Marburg, Marburg, Germany.}
\email{gumm@mathematik.uni-marburg.de}
\thanks{I am sincerely indebted to Peter Jipsen and Andrew Moshier for inspiring
discussions during my stay at Chapman University, where the main result
of this note was obtained.}
\begin{abstract}
If $F$ is a (not necessarily associative) monad on $Set$, then the
natural transformation $F(A\times B)\to F(A)\times F(B)$ is surjective
if and only if $F(\boldsymbol{1})=\boldsymbol{1}$. Specializing $F$
to $F_{\mathcal{V}}$, the free algebra functor for a variety $\mathcal{V},$
this result generalizes and clarifies an observation by Dent, Kearnes
and Szendrei in \citep{modularityTest}.
\end{abstract}

\maketitle

\section{Introduction}

A key observation in \citep{modularityTest} by T.~Dent, K.~Kearnes,
and Á.~Szendrei is that for any variety $\mathcal{V}$ with idempotent
operations each set theoretic product decomposition 
\[
d:\{x,y,z,u\}\twoheadrightarrow\{a,b\}\times\{a,b\}
\]
always extends to a surjective homomorphism 
\begin{equation}
\delta:F_{\mathcal{V}}(\{x,y,z,u\})\twoheadrightarrow F_{\mathcal{V}}(\{a,b\})\times F_{\mathcal{V}}(\{a,b\})\label{eq:surjective}
\end{equation}
 from the 4-generated free algebra in $\mathcal{V}$ to the square
of the 2-generated one.

This fact has an interesting geometric interpretation, which is relevant
in the study of congruence modularity. The shifting lemma from \citep{Gum83},
which is concerned with shifting a congruence $\gamma$ from one side
of an $\alpha-\beta$-parallelogram to the opposite side modulo $\alpha\wedge\beta$,
can be specialized to axis-parallel rectangles inside a product of
algebras where $\alpha$ and $\beta$ are in fact kernels of the projections
and $\gamma$ a factor congruence.

\[
\xymatrix{\circ\ar@{-}[d]_{\beta}\ar@{-}[rr]^{\alpha}\ar@{-}@/^{0.5pc}/[d]^{\gamma} & \, & \circ\ar@{-}[d]_{\beta}\ar@{--}@/^{0.5pc}/[d]^{\gamma}\\
\circ\ar@{-}[rr]_{\alpha} & \, & \circ
}
\]

Surjectivity of the above map implies that the projections on the
image commute, and since $ker\,\delta=\alpha\wedge\beta$, it follows
that $\alpha$ and $\beta$ also commute in the preimage. In particular,
therefore, the shifting lemma, which in \citep{Gum83} is the major
geometrical tool for studying congruence modularity, is only needed
in situations of permuting congruence relations $\alpha$ and $\beta$.
The restriction to idempotent varieties in these studies is not severe,
since a variety is congruence modular iff its idempotent reduct is
modular.

Variations of the shifting lemma (e.g. in \citep{Chajdan2003}) and,
more recently, categorical generalizations as in \citep{Gran2019}
suggest to investigate the situation in a more general context. In
this note, therefore, rather than exploring further ramifications
of the above observation, we explore the abstract reasons behind the
surjectivity of $\delta$ in (\ref{eq:surjective}). It turns out
that we can deal with this in a framework which is more abstract than
universal algebras and varieties. We are rather considering (not necessarily
associative) $Set$-monads $F$, of which the functor $F_{\mathcal{V}}$,
associating with a set $X$ the free algebra $F_{\mathcal{V}}(X)$
and with a map $g:X\to Y$ its homomorphic extension $\bar{g}:F_{\mathcal{V}}(X)\to F_{\mathcal{V}}(Y)$,
is just an example.

\section{Monads and main result}

Monads on a category $\mathcal{C}$ are functors $F:\mathcal{C}\to\mathcal{C}$
together with two natural transformations $\iota:Id\to F$ and $\mu:F\circ F\to F$,
satisfying two unit laws and and an associative law. Our results will
even hold for nonassociative monads, so skippping the associative
law, we shall only state the unit laws: 
\begin{equation}
\mu_{X}\circ\iota_{F(X)}=id_{F(X)}=\mu_{X}\circ F\iota_{X}\label{eq:unit laws}
\end{equation}
Equations (\ref{eq:unit laws}) are usually expressed as a commutative
diagram: 
\[
\xymatrix{F(X)\ar[r]^{\iota_{F(X)}} & F(F(X))\ar[d]^{\mu_{X}} & F(X)\ar[l]_{F\iota_{X}}\\
 & F(X)\ar@{=}[ul]\ar@{=}[ur]
}
\]

Rather easy examples of monads on the category of Sets are obtained
from collection data types in programming, such as $List\langle X\rangle$,
$Set\langle X\rangle$ or $Tree\langle X\rangle$, see also \citep{Manes98}.
In popular programming languages, $List\langle X\rangle$ denotes
the type of lists of elements from a base type $X.$ Given a function
$g:X\to Y$, the function $map(g):List\langle X\rangle\to List\langle Y\rangle$
which sends $[x_{1},...,x_{n}]\in List\langle X\rangle$ to the list
$[g(x_{1}),...,g(x_{n})]\in List\langle Y\rangle$ represents the
action of the functor $List$ on maps. In mathematical notation we
write $(List~g)$ rather than $map(g)$. Obviously, $map(f\circ g)=map(f)\circ map(g)$
and $map(id_{X})=id_{List\langle X\rangle},$ so the pair $List\langle-\rangle$
with $map$ indeed establishes a functor.

For $List$ to be a monad, we need a natural transformation $\iota:Id\to List$,
as well as a ``multiplication'' $\mu:List\circ List\to List$. The
former can be chosen as the \emph{singleton} operator with $\iota_{X}:X\to List\langle X\rangle$
sending any $x\in X$ to the one-element list $[x]$.

The monad multiplication $\mu$ is for each type $X$ defined as 
\[
\mu_{X}:List\langle List\langle X\rangle\rangle\to List\langle X\rangle,
\]
taking a list of lists $[l_{1},...,l_{n}]$ and appending them into
a single list $l_{1}+...+l_{n}$. Programmers call this operation
\emph{``flatten}''. The unit laws then state that for each list
$l=[x_{1},...,x_{n}]\in List\langle X\rangle$ we should have 
\[
flatten([\,[x_{1},..,x_{n}]\,])=[x_{1},...,x_{n}]=flatten([\,[x_{1}],...,[x_{n}]\,],
\]
which is obvious. Not all monads arise from collection classes, and
in recent years other uses of monads have all but revolutionized functional
programming, see e.g. \citep{MOGGI199155} or \citep{Wad92}.

Relevant for universal algebraists is the fact that for every variety
$\mathcal{V}$ the construction of the free Algebra $F_{\mathcal{V}}(X)$
over a set $X$ is a monadic functor. In this case, $\iota_{X}:X\to F_{\mathcal{V}}(X)$
is the inclusion of variables, or rather their interpretations as
$\mathcal{V}-$terms.

The defining property of $F_{\mathcal{V}}(X)$ states that each map
$g:X\to A$ for $A\in\mathcal{V}$ has a unique homomorphic extension
$\bar{g}:F_{\mathcal{V}}(X)\to A$.

From a map $f:X\to Y$, we therefore obtain the homomorphism 
\[
F_{\mathcal{V}}f:F_{\mathcal{V}}(X)\to F_{\mathcal{V}}(Y)
\]
as unique homomorphic extension of the composition $\iota_{Y}\circ f:X\to F_{V}(Y)$:

\[
\xymatrix{F_{\mathcal{V}}(X)\ar[r]^{F_{\mathcal{V}}f} & F_{\mathcal{V}}(Y)\\
X\ar@{^{(}->}[u]^{\iota_{X}}\ar[r]^{f}\ar@{..>}[ur] & Y\ar@{^{(}->}[u]_{\iota_{Y}}
}
\]

$\mu:F_{\mathcal{V}}(F_{\mathcal{V}}(X))\to F_{\mathcal{V}}(X)$ can
be considered as term composition: a term $t(t_{1},...,t_{n}),$ whose
argument positions have been filled by other terms, is interpreted
as an honest $\mathcal{V}$-term. To make this precise, consider the
diagram below, in which $F_{\mathcal{V}}(X)$ appears in two roles
-- as an algebra and as a set of free variables for $F_{\mathcal{V}}(F_{\mathcal{V}}(X))$.
(In the diagram we have dropped the lower indices to $\iota$ and
$id$ for the sake of readibility.)

Here $\mu_{X}$ is defined as the homomorphic extension of the equality
map $id_{F_{\mathcal{V}}(X)}$ from $F_{\mathcal{V}}(X)$, considered
as set of free variables for $F_{\mathcal{V}}(F_{\mathcal{V}}(X))$,
to $F_{\mathcal{V}}(X)$ considered as $\mathcal{V}$-algebra. 
\[
\xymatrix{ & F_{\mathcal{V}}(F_{\mathcal{V}}(X))\ar@<.2pc>[dl]^{\mu_{X}}\\
\xyC{3pc}F_{\mathcal{V}}(X)\ar[r]_{id}\ar@<.2pc>[ur]^{F_{\mathcal{V}}\iota} & F_{\mathcal{V}}(X)\ar[l]\ar@{^{(}->}[u]_{\iota}\\
X\ar@{^{(}->}[u]^{\iota}\ar@{^{(}->}[ur]_{\iota}
}
\]
The first monad equation immediately follows from the definition of
$\mu,$ and the second equation

\[
\mu\circ(F_{\mathcal{V}}\iota_{X})=id_{F_{\mathcal{V}}(X)}
\]
follows from the calculation
\begin{eqnarray*}
\mu\circ(F_{\mathcal{V}}\iota_{X})\circ\iota_{X} & = & \mu\circ\iota_{F_{\mathcal{V}}(X)}\circ\iota_{X}\\
 & = & id_{F_{\mathcal{V}}(X)}\circ\iota_{X}
\end{eqnarray*}
demonstrating that both sides agree on the generators of $F_{\mathcal{V}}(X)$,
and consequently on all of $F_{\mathcal{V}}(X)$.

The above mentioned examples $Tree\langle X\rangle$, $List\langle X\rangle$
and $Set\langle X\rangle$ just correspond to the free groupoid, the
free semigroup, and the free semilattice over the set $X$ of generators,
and are themselves instances of this scheme.

We are now ready to state our main result:
\begin{thm}
\label{thm:main}A (not necessarily associative) Set-monad $F$ weakly
preserves products if and only if $F(\boldsymbol{1})\cong\boldsymbol{1}$.
\end{thm}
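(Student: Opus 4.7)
The plan is to treat the two implications separately. For the forward direction I would specialize to $A = B = \boldsymbol{1}$. Since $\boldsymbol{1} \times \boldsymbol{1} \cong \boldsymbol{1}$ and both product projections are the identity on $\boldsymbol{1}$, the canonical map $F(\boldsymbol{1} \times \boldsymbol{1}) \to F(\boldsymbol{1}) \times F(\boldsymbol{1})$ collapses to the diagonal $x \mapsto (x, x)$; surjectivity of a diagonal forces the target to be a singleton, and since $\iota_{\boldsymbol{1}}$ supplies at least one element of $F(\boldsymbol{1})$, one concludes $F(\boldsymbol{1}) \cong \boldsymbol{1}$.

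For the converse, I assume $F(\boldsymbol{1}) \cong \boldsymbol{1}$ and, given $(a, b) \in F(A) \times F(B)$ with $A, B$ nonempty, I will exhibit an explicit preimage in $F(A \times B)$. The idea is to use $a$ to ``fill'' the slice $A \times \{y\}$ for each $y \in B$ and then flatten with $\mu$. Concretely, for $y \in B$ set $k_y : A \to A \times B$, $x \mapsto (x, y)$, define $k : B \to F(A \times B)$ by $k(y) := (F k_y)(a)$, and take
\[
c := \mu_{A \times B}\bigl((F k)(b)\bigr).
\]
To verify the two projections I would invoke naturality of $\mu$ to rewrite $F\pi_i(c) = \mu \bigl( F(F\pi_i \circ k)(b)\bigr)$ and compute $F\pi_i \circ k$ directly. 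Since $\pi_1 \circ k_y = \mathrm{id}_A$, the map $F\pi_1 \circ k : B \to F(A)$ is the constant map with value $a$ and therefore factors through $\boldsymbol{1}$; using $F(\boldsymbol{1}) \cong \boldsymbol{1}$ together with naturality of $\iota$, one identifies $F(F\pi_1 \circ k)(b)$ with $\iota_{F(A)}(a)$, so the first unit law in~(\ref{eq:unit laws}) gives $F\pi_1(c) = a$. Dually, $\pi_2 \circ k_y$ is the constant map $A \to B$ with value $y$, hence the same argument collapses $F\pi_2 \circ k$ to $\iota_B$, and the second unit law yields $F\pi_2(c) = b$.

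The principal obstacle is conceptual: guessing the formula for $c$. Once one sees to lift $a$ into each slice $A \times \{y\}$ and flatten the resulting $B$-indexed family via $\mu$, the remainder is a pair of naturality-plus-unit-law computations that does not even invoke associativity of $\mu$. The hypothesis $F(\boldsymbol{1}) \cong \boldsymbol{1}$ enters in exactly one way: it is precisely what prevents $F\pi_1(c)$ from becoming an ``$|b|$-fold copy'' of $a$---as happens in the $\mathit{List}$ monad, where $F(\boldsymbol{1}) = \mathbb{N}$ and the analogous construction produces a concatenation of $|b|$ copies of $a$. The degenerate cases where $A$ or $B$ is empty must be treated separately; from $F(\boldsymbol{1}) \cong \boldsymbol{1}$ together with the fact that any two maps $\emptyset \to Y$ agree, one deduces that either $F(\emptyset) = \emptyset$ or $F$ is constantly $\boldsymbol{1}$, and in either subcase the canonical map is trivially surjective.
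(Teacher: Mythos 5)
Your proof is correct and follows essentially the same route as the paper: the forward direction via the diagonal on $F(\boldsymbol{1}\times\boldsymbol{1})$, and the converse via the same slice-and-flatten element (your $c=\mu((Fk)(b))$ is the paper's $t=\mu((F\tau)(p))$ with the two factors' roles interchanged), verified by the identical combination of naturality of $\mu$, the preservation of constant maps by connected functors, and the two unit laws.
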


It will be easy to see (lemma \ref{lem:weak product preservation}
below) that \emph{$F$ weakly preserves the product $A_{1}\times A_{2}$
}if and only if the canonical morphism $\delta=(F\pi_{1},F\pi_{2})$
in the below diagram is epi:

\begin{equation}
\xymatrix{F(A_{1}\times A_{2})\ar[dr]_{F\pi_{i}}\ar[rr]^{\delta} &  & F(A_{1})\times F(A_{2})\ar[dl]^{\eta_{i}}\\
 & F(A_{i})
}
\label{eq:delta}
\end{equation}
The starting point of our discussion, (\ref{eq:surjective}) from
\citep{modularityTest}, is therefore seen to represent an instance
of this result when setting $A_{1}=A_{2}=\{a,b\}$ and $F=F_{\mathcal{V}}$.
But before coming to its proof we need a few preparations.

\section{Connected Functors}

Put $\boldsymbol{1}=\{0\}$ and for any set $X$ denote by $!_{X}$
the unique (terminal) map from $X$ to \textbf{$\boldsymbol{1}$}.
A $Set$-functor $F$ is called \emph{connected}, if $F(\boldsymbol{1})\cong\boldsymbol{1}$.
Given a variety $\mathcal{V}$, the functor $F_{\mathcal{V}}$ is
connected if and only if $\mathcal{V}$ is idempotent.

It is well known, see \citep{TrnkovaDescr1}, that every $Set$-Functor
$F$ can be constructed as sum of connected functors: 
\[
F=\Sigma_{e\in F(1)}F_{e}.
\]
For $e\in F(\boldsymbol{1})$ one simply puts $F_{e}(X)=\{u\in F(X)\mid(F!_{X})(u)\}.$
On maps $f:X\to Y$, each subfunctor $F_{e}$ is just the domain-codomain-restriction
of $Ff$ to $F_{e}(X)$.

In the following we denote by $c_{y}^{X}:X\to Y$ or, if $X$ is clear,
simply by $c_{y}$ the constant map with value $y\in Y.$ We shall
need the following lemma:
\begin{lem}
\label{lem:F(constant)}If $F$ is a connected functor, then $Fc_{y}^{X}$
is a constant map. Whenever \textup{$\iota:Id\to F$} is a natural
transformation, then \textup{$Fc_{y}^{X}=c_{\iota_{Y}(y)}^{F(X)}$}.
\end{lem}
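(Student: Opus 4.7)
The plan is to exploit the factorization of every constant map through the terminal set $\mathbf{1}$. Write $c_y^X = c_y^{\mathbf{1}} \circ {!_X}$, where $c_y^{\mathbf{1}}:\mathbf{1}\to Y$ picks out the element $y$. Functoriality then yields
\[
Fc_y^X = F(c_y^{\mathbf{1}}) \circ F(!_X),
\]
and the intermediate set in this factorization is $F(\mathbf{1})$. By the connectedness hypothesis $F(\mathbf{1})\cong\mathbf{1}$, so $Fc_y^X$ factors through a one-element set and must itself be constant. This settles the first assertion with essentially no computation.

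For the strengthening under the hypothesis that $\iota:Id\to F$ is a natural transformation, I need to identify the unique element of $F(\mathbf{1})$ and then track where $F(c_y^{\mathbf{1}})$ sends it. The obvious candidate for the unique element is $\iota_{\mathbf{1}}(0)$, and since $F(\mathbf{1})$ is a singleton it must coincide with this element. To evaluate $F(c_y^{\mathbf{1}})(\iota_{\mathbf{1}}(0))$, I invoke naturality of $\iota$ at the morphism $c_y^{\mathbf{1}}$: the naturality square
\[
F(c_y^{\mathbf{1}}) \circ \iota_{\mathbf{1}} = \iota_Y \circ c_y^{\mathbf{1}},
\]
evaluated at $0\in\mathbf{1}$, gives $F(c_y^{\mathbf{1}})(\iota_{\mathbf{1}}(0)) = \iota_Y(y)$. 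Substituting back into the factorization, every $u\in F(X)$ satisfies $(Fc_y^X)(u)=\iota_Y(y)$, which is precisely the asserted identity $Fc_y^X = c_{\iota_Y(y)}^{F(X)}$.

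I do not expect a serious obstacle here: both clauses are essentially one-line consequences of the universal property of $\mathbf{1}$ together with the assumption that $F$ preserves its size. The only point worth a moment of care is that naturality of $\iota$ already pins down which element of $F(\mathbf{1})$ is the unique one, so that the second clause follows automatically from the first without any further appeal to structure on $F$.
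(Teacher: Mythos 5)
Your proof is correct and follows essentially the same route as the paper: factor $c_y^X$ through $\mathbf{1}$, use $F(\mathbf{1})\cong\mathbf{1}$ to get constancy, and then apply the naturality square of $\iota$ at $\bar{y}=c_y^{\mathbf{1}}$ to identify the constant value as $\iota_Y(y)$. The only cosmetic difference is that you evaluate elementwise where the paper writes the same argument as a chain of composite equalities via $F!_X=\iota_{\mathbf{1}}\circ{!_{F(X)}}$.
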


\begin{proof}
For $y\in Y,$ denote by $\bar{y}:\boldsymbol{1}\to Y$ the constant
map with value $y.$ Observe, that an arbitrary map $f$ is constant
if and only if it factors through $1$, i.e. $c_{y}^{X}=\bar{y}\circ!_{X}$.
Applying $F$ and adding the natural transformation $\iota$ into
the picture,

\[
\xyC{1pc}\xyR{1pc}\xymatrix{F(X)\ar[dr]_{F!_{X}}\ar[rr]^{Fc_{y}^{X}}\ar@/_{2.8pc}/[ddrr]_{!_{F(X)}} &  & F(Y)\\
 & F(\boldsymbol{1})\cong\boldsymbol{1}\ar[ur]_{F\bar{y}} &  & Y\ar[ul]_{\iota_{Y}}\\
 &  & \,\,\boldsymbol{1}\,\,\ar@{>->>}[ul]^{\iota_{\boldsymbol{1}}}\ar[ur]_{\bar{y}}
}
\]
 we obtain:
\begin{eqnarray*}
Fc_{y}^{X} & = & F\bar{y}\circ F!_{X}\\
 & = & F\bar{y}\circ\iota_{\boldsymbol{1}}\,\circ\,!_{F(X)}\\
 & = & \iota_{Y}\circ\bar{y}\,\circ\,!_{F(X)}\\
 & = & \overline{\iota_{Y}(y)}\,\circ\,!_{F(X)}\\
 & = & c_{\iota_{Y}(y)}^{F(X)}.
\end{eqnarray*}
\end{proof}
In the above, we have seen, that connected functors preserve constant
maps. It might be interesting to remark, that this very property characterizes
connected functors:
\begin{cor}
\label{cor:connected functors preserve constants}A functor $F$ is
connected if and only if for every constant morphism $c_{y}$ the
morphism $Fc_{y}$ is constant, again.
\end{cor}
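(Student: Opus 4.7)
The plan is to observe that the forward direction is immediate from Lemma~\ref{lem:F(constant)}: its first assertion already states that $Fc_y^X$ is a constant map whenever $F$ is connected, and the conclusion of mere constancy does not require any choice of natural transformation $\iota$. So I would dispose of the ``only if'' direction with a single sentence and spend the rest of the proof on the converse.

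For the converse, my idea is to test the hypothesis against $\boldsymbol{1}$ itself. Since $\boldsymbol{1}=\{0\}$ is a singleton, the identity $id_{\boldsymbol{1}}$ coincides with the constant map $c_0^{\boldsymbol{1}}$. Applying $F$ and using functoriality, $F(id_{\boldsymbol{1}})=id_{F(\boldsymbol{1})}$ must then be constant by hypothesis. But the identity on a set is constant precisely when that set has at most one element, so this immediately forces $|F(\boldsymbol{1})|\le 1$.

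To upgrade ``at most one'' to ``exactly one'' I would rule out the possibility $F(\boldsymbol{1})=\emptyset$: were it empty, then the map $F!_X\colon F(X)\to F(\boldsymbol{1})$ would force $F(X)=\emptyset$ for every $X$, so $F$ would be the empty $Set$-functor, a degenerate case that satisfies the constancy hypothesis only vacuously and is tacitly excluded from the discussion (the decomposition $F=\Sigma_{e\in F(\boldsymbol{1})}F_e$ recalled just before the lemma would then be an empty sum). Under that mild non-triviality convention one concludes $F(\boldsymbol{1})\cong\boldsymbol{1}$, i.e.\ that $F$ is connected.

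The main (and really the only) obstacle is this edge case; the geometric heart of the argument is the one-line observation that on the terminal object the identity \emph{is} a constant map, which is what turns the hypothesis into a constraint on $F(\boldsymbol{1})$.
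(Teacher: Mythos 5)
Your proposal is correct and follows essentially the same route as the paper: the forward direction is delegated to Lemma~\ref{lem:F(constant)}, and the converse exploits that $id_{\boldsymbol{1}}$ is constant, so $id_{F(\boldsymbol{1})}$ must be constant, forcing $F(\boldsymbol{1})\cong\boldsymbol{1}$. Your extra care about the case $F(\boldsymbol{1})=\emptyset$ is a reasonable refinement the paper glosses over (and is in fact already excluded if one insists, as the paper does, that a constant map is one factoring through $\boldsymbol{1}$, since the empty identity then fails to be constant).
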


\begin{proof}
Suppose that $F$ preserves constant maps. Since $id_{\boldsymbol{1}}$
is constant, $F(id_{\boldsymbol{1}})=id_{F(\boldsymbol{1})}$ must
be constant, too, which implies $F(\boldsymbol{1})\cong\boldsymbol{1}$.
\end{proof}
In general, the elements of $F(\boldsymbol{1})$ correspond uniquely
to the natural transformations between the identity functor $Id$
and $F$. This can be seen by instantiating the Yoneda Lemma 
\begin{equation}
nat(Hom(A,-),F(-))\cong F(A)\label{eq:yoneda}
\end{equation}
with $A=\boldsymbol{1}.$ Therefore we note:
\begin{cor}
A monad $(F,\iota,\mu)$ is connected if and only if $\iota$ is the
only transformation from the identity functor to $F$.
\end{cor}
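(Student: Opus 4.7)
The plan is to exploit the Yoneda correspondence that the paper has just spelled out. Note that in $Set$, the representable functor $Hom(\boldsymbol{1},-)$ is naturally isomorphic to the identity functor $Id$, since a map $\boldsymbol{1}\to X$ is the same thing as an element of $X$. Instantiating (\ref{eq:yoneda}) at $A=\boldsymbol{1}$ therefore yields a bijection
\[
nat(Id,F)\;\cong\;F(\boldsymbol{1}),
\]
under which a natural transformation $\tau:Id\to F$ is sent to $\tau_{\boldsymbol{1}}(0)\in F(\boldsymbol{1})$.

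With this bijection in hand, the corollary is immediate. The monad structure on $F$ supplies at least one natural transformation $Id\to F$, namely $\iota$, so $F(\boldsymbol{1})$ is always inhabited. Consequently, $F(\boldsymbol{1})\cong\boldsymbol{1}$ (i.e., $F$ is connected) if and only if $F(\boldsymbol{1})$ has no further element, which by the bijection above holds if and only if $\iota$ is the unique natural transformation $Id\to F$.

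There is no real obstacle; the only pitfall is cosmetic, namely making the identification $Hom(\boldsymbol{1},-)\cong Id$ explicit so the Yoneda instance actually delivers the claimed correspondence $nat(Id,F)\cong F(\boldsymbol{1})$ rather than the literal statement of (\ref{eq:yoneda}). Once this is spelled out in a single line, the equivalence follows by simply comparing cardinalities.
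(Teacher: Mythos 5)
Your argument is exactly the one the paper intends: instantiate the Yoneda bijection $nat(Hom(\boldsymbol{1},-),F)\cong F(\boldsymbol{1})$, identify $Hom(\boldsymbol{1},-)$ with $Id$ in $Set$, and use the fact that $\iota$ already witnesses one natural transformation to turn ``$F(\boldsymbol{1})$ is a singleton'' into ``$\iota$ is the unique transformation $Id\to F$.'' The proposal is correct and matches the paper's (implicit) proof, with the added benefit of making the identification $Hom(\boldsymbol{1},-)\cong Id$ explicit.
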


\begin{defn}
Let $\mathcal{C}_{\boldsymbol{1}}$ be the constant functor with $\mathcal{C}_{\boldsymbol{1}}(X)=\boldsymbol{1}$
for all $X$ and $\mathcal{C}_{\boldsymbol{1}}f=id_{\boldsymbol{1}}$
for all $f$. We say that a functor $F$ \emph{possesses a constant},
if there is a transformation from $\mathcal{C}_{\boldsymbol{1}}$
to $F$ which is natural, except perhaps at $X=\emptyset$.
\end{defn}

Clearly, each element of $F(\emptyset)$ gives rise to a constant,
but not conversely, since there is nothing to stop us from changing
$F$ only on the empty set $\emptyset$ and on empty mappings $\emptyset_{X}:\emptyset\to X$
by choosing any $U\subseteq F(\emptyset)$ and redefining $F'(\emptyset):=U$
as well as $F'\emptyset_{X}=F\emptyset_{X}\circ\subseteq_{U}^{X}.$
For that reason we were not requiring naturality at $\emptyset$ in
the above definition.

We shall need a further observation:
\begin{lem}
\label{lem:constant subfunctor}A connected functor either possesses
a constant or it has the identity functor as a subfunctor.
\end{lem}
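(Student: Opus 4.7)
The plan is to exhibit the canonical natural transformation $\iota : Id \to F$ and then split on whether it is componentwise injective.

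First I would invoke the Yoneda isomorphism~(\ref{eq:yoneda}) at $A = \boldsymbol{1}$: since $\mathrm{Hom}(\boldsymbol{1},-) \cong Id$ on $Set$ and $F(\boldsymbol{1}) \cong \boldsymbol{1}$, this produces a unique natural transformation $\iota : Id \to F$. Writing $e$ for the single element of $F(\boldsymbol{1})$ and $\bar{x} : \boldsymbol{1} \to X$ for the constant with value $x$, the explicit formula on nonempty $X$ is $\iota_X(x) = (F\bar{x})(e)$, while $\iota_{\emptyset}$ is trivially the empty map (and naturality at $\emptyset$ holds vacuously).

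In the first branch, suppose every component $\iota_X$ is injective. Then identifying $X$ with $\iota_X(X) \subseteq F(X)$ exhibits $Id$ as a subfunctor of $F$: by naturality, $(Ff)(\iota_X(X)) \subseteq \iota_Y(Y)$ for each $f : X \to Y$, and under the identification the restriction of $Ff$ to these images is just $f$ itself.

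Otherwise some $\iota_X$ identifies two distinct points $x_0 \neq x_1$. Applying naturality along any separator $f : X \to \boldsymbol{2}$ with $f(x_0) = 0$ and $f(x_1) = 1$ yields
\[
\iota_{\boldsymbol{2}}(0) \;=\; (Ff)(\iota_X(x_0)) \;=\; (Ff)(\iota_X(x_1)) \;=\; \iota_{\boldsymbol{2}}(1);
\]
denote this common value by $e_2 \in F(\boldsymbol{2})$. For any nonempty $Y$ and any $a, b \in Y$, picking $g : \boldsymbol{2} \to Y$ with $g(0) = a$, $g(1) = b$ and invoking naturality at $g$ delivers $\iota_Y(a) = (Fg)(e_2) = \iota_Y(b)$. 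Hence $\iota_Y$ assumes a single value $\tau_Y \in F(Y)$ on every nonempty $Y$, and the family $(\tau_Y)$ defines a transformation $\mathcal{C}_{\boldsymbol{1}} \to F$ whose naturality between nonempty sets is inherited from that of $\iota$; so $F$ possesses a constant in the sense of the preceding definition.

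The hard part, I expect, will be the insight underlying the third paragraph: noticing that a single failure of injectivity of $\iota$ must propagate, via naturality along maps $X \to \boldsymbol{2}$ and $\boldsymbol{2} \to Y$, all the way to force constancy of $\iota_Y$ on every nonempty $Y$. Everything else is bookkeeping, and the exemption of $\emptyset$ in the definition of ``possessing a constant'' is exactly what accommodates the fact that $\iota_{\emptyset}$ is the empty map and cannot itself serve as a constant.
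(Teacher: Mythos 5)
Your proof is correct and takes essentially the same route as the paper's: obtain the unique $\iota:Id\to F$ from the Yoneda Lemma, observe that injectivity of every component exhibits $Id$ as a subfunctor, and otherwise propagate a single failure of injectivity via naturality to force each $\iota_{Y}$ to be constant on nonempty $Y$, yielding a constant of $F$. The only cosmetic difference is that you route the collapse through $\boldsymbol{2}$, whereas the paper uses a direct map $f:X\to Y$ with $f(x_{1})=y_{1}$, $f(x_{2})=y_{2}$.
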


\begin{proof}
By the Yoneda-Lemma, there is exactly one natural transformation $\iota:Id\to F$.
Assume that some $\iota_{X}$ is not injective, then there are $x_{1}\ne x_{2}\in X$
with $\iota_{X}(x_{1})=\iota_{X}(x_{2})$. Given an an arbitrary $Y$
with $y_{1},y_{2}\in Y,$ consider a map $f:X\to Y$with $f(x_{1})=y_{1}$
and $f(x_{2})=y_{2}$. By naturality, 
\[
\iota_{Y}(y_{1})=\iota_{Y}(f(x_{1}))=Ff\circ\iota_{X}(x_{1})=Ff\circ\iota_{X}(x_{2})=\iota_{Y}(y_{2}),
\]
hence each $\iota_{Y}$ is konstant and therefore factors through
$\boldsymbol{1}$. This makes the upper and lower triangle inside
the following naturality square commute, too. The left triangle commutes
since $\boldsymbol{1}$ is terminal. If $X\ne\emptyset$, the terminal
map $!_{X}:X\to1$ is epi, from which we now conclude that the right
triangle commutes as well, except, possibly, when $X=\emptyset$.
Thus $F$ posseses a constant.

\[
\xymatrix{\,\ar@<.2pc>[r]^{\bar{x_{1}}}\ar@<-.2pc>[r]_{\bar{x_{2}}} & X\ar[dd]_{f}\ar[rr]^{\iota_{X}}\ar[dr] &  & F(X)\ar[dd]^{Ff}\\
 &  & \boldsymbol{\,\,\,\,1\,\,\,}\ar[ur]\ar[dr]\\
\,\ar@<.2pc>[r]^{\bar{y_{1}}}\ar@<-.2pc>[r]_{\bar{y_{2}}} & Y\ar[rr]_{\iota_{Y}}\ar[ur] &  & F(Y)
}
\]
\end{proof}

\section{Preservation properties}

We are concerned with the question, under which conditions the $\delta$
in equation (\ref{eq:surjective}) is epi. Therefore, we take a look
at the canonical map $\delta=(F\pi_{1},F\pi_{2}):F(A_{1}\times A_{2})\to F(A_{1})\times F(A_{2})$
which arises from the commutative diagram (\ref{eq:delta}), where
$\pi_{i},$ resp $\eta_{i},$ denote the canonical component projections.

The first thing to observe is:
\begin{lem}
$\delta=(F\pi_{1},F\pi_{2}):F(A_{1}\times A_{2})\to FA_{1}\times FA_{2}$
is natural in each component.
\end{lem}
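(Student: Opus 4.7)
The plan is to verify naturality in the first variable; the second variable is entirely symmetric. Fix a map $f:A_1\to A_1'$ and aim at the commutativity of the candidate naturality square
\[
\xymatrix{F(A_1\times A_2)\ar[r]^{\delta}\ar[d]_{F(f\times id_{A_2})} & FA_1\times FA_2\ar[d]^{Ff\times id_{FA_2}}\\
F(A_1'\times A_2)\ar[r]_{\delta'} & FA_1'\times FA_2,}
\]
where $\delta$ and $\delta'$ abbreviate the pairings $(F\pi_1,F\pi_2)$ at $A_1\times A_2$ and $A_1'\times A_2$, respectively.

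My first step is to invoke the universal property of the product $FA_1'\times FA_2$: two parallel morphisms into it coincide if and only if they agree after post-composition with each of the two projections $\eta_1'$ and $\eta_2$. This reduces the claim to two separate equalities, one in the hom-set into $FA_1'$ and one in the hom-set into $FA_2$.

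Then I would carry out both component checks by functorial chasing. Post-composing the clockwise leg with $\eta_1'$ gives $Ff\circ\eta_1\circ\delta=Ff\circ F\pi_1=F(f\circ\pi_1)$, using the defining property $\eta_i\circ\delta=F\pi_i$ of the pairing together with functoriality. Post-composing the counter-clockwise leg with $\eta_1'$ yields $F\pi_1'\circ F(f\times id_{A_2})=F(\pi_1'\circ(f\times id_{A_2}))$, again by functoriality. The two match because $\pi_1'\circ(f\times id_{A_2})=f\circ\pi_1$ in $Set$. The analogous check with $\eta_2$ is even shorter: both legs reduce to $F\pi_2$, since $\pi_2\circ(f\times id_{A_2})=\pi_2$ and the right-hand vertical arrow is the identity on the second coordinate.

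I do not anticipate any real obstacle here: the lemma is purely formal and leans only on the functoriality of $F$ and the universal property of products in $Set$. Neither the monad structure, the presence of a unit $\iota$, nor the connectedness hypothesis plays any role. The point of having the statement on record is to license, in the sequel, treating $\delta$ as the component of a natural transformation $F(-\times -)\Rightarrow F(-)\times F(-)$ when analysing under what conditions it is epi.
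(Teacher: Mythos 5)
Your proposal is correct and follows essentially the same route as the paper: both reduce the naturality square to its two components via the product projections and then close each component with functoriality of $F$ and the identity $\pi_i'\circ(f\times g)=$ (map)$\circ\pi_i$; the paper merely writes the computation elementwise and handles both variables simultaneously with a general $f\times g$, while you argue point-free one variable at a time. No gap.
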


\begin{proof}
Assume $f:A_{1}\to A_{1}'$ and $g:A_{2}\to A_{2}'$ be given. We
want to show that the following diagram commutes:
\[
\xyC{5pc}\xyR{2pc}\xymatrix{F(A_{1}\times A_{2})\ar[r]^{(F\pi_{1},F\pi_{2})}\ar[d]_{F(f\times g)} & F(A_{1})\times F(A_{2})\ar[d]^{Ff\times Fg}\\
F(A_{1}'\times A_{2}')\ar[r]^{(F\pi'_{1},F\pi'_{2})} & F(A_{1}')\times F(A_{2}')
}
\]
We calculate:
\begin{eqnarray*}
((Ff\times Fg)\circ(F\pi_{1},F\pi_{2}))(u) & = & (Ff\times Fg)((F\pi_{1})(u),(F\pi_{2})(u))\\
 & = & ((Ff\circ F\pi_{1})(u),(Fg\circ F\pi_{2})(u))\\
 & = & (F(f\circ\pi_{1})(u),F(g\circ\pi_{2})(u))\\
 & = & (F(\pi_{1}'\circ f\times g)(u),F(\pi_{2}'\circ f\times g)(u))\\
 & = & ((F\pi_{1}'\circ F(f\times g))(u),(F\pi_{2}'\circ F(f\times g))(u))\\
 & = & (F(\pi_{1}')(F(f\times g)(u)),F(\pi_{2}')(F(f\times g)(u)))\\
 & = & (F\pi_{1}',F\pi_{2}')(F(f\times g)(u))\\
 & = & ((F\pi_{1}',F\pi_{2}')\circ F(f\times g))(u).
\end{eqnarray*}
\end{proof}
Notice, that in order for $\delta$ to be surjective, the functor
$F$ must be connected or trivial:
\begin{lem}
\label{lem:F1=00003D1}If the canonical decomposition as in Theorem
1 is always epi, then either $F(1)\cong1$ or $F$ is the trivial
functor with constant value $\emptyset$.
\end{lem}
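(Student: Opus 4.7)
The plan is to specialise the hypothesis to the simplest nontrivial case, namely $A_1=A_2=\boldsymbol{1}$, and exploit the fact that a diagonal map is rarely surjective.

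Concretely, I would first observe that with $A_1=A_2=\boldsymbol{1}$ the canonical projections $\pi_1,\pi_2:\boldsymbol{1}\times\boldsymbol{1}\to\boldsymbol{1}$ are (via the canonical iso $\boldsymbol{1}\times\boldsymbol{1}\cong\boldsymbol{1}$) both the identity. Applying $F$ turns the canonical map in (\ref{eq:delta}) into the diagonal
\[
\Delta:F(\boldsymbol{1})\to F(\boldsymbol{1})\times F(\boldsymbol{1}),\qquad u\mapsto(u,u).
\]
The assumption forces $\Delta$ to be surjective. A diagonal on a set $S$ is surjective if and only if any two elements of $S$ coincide, so $|F(\boldsymbol{1})|\le 1$, which leaves the two alternatives $F(\boldsymbol{1})\cong\boldsymbol{1}$ or $F(\boldsymbol{1})=\emptyset$.

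The remaining step is to rule out the second alternative unless $F$ is the everywhere-empty functor. For any set $X$ there is a terminal map $!_X:X\to\boldsymbol{1}$ (including the unique empty map when $X=\emptyset$), whence $F!_X:F(X)\to F(\boldsymbol{1})=\emptyset$. Since a set function into $\emptyset$ can only exist from $\emptyset$, this forces $F(X)=\emptyset$ for every set $X$, and on morphisms $F$ is necessarily the empty map, so $F$ coincides with the constant functor $\mathcal{C}_{\emptyset}$.

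I do not anticipate a genuine obstacle here: the argument is really just ``diagonal surjective implies subsingleton'' plus a one-line propagation of emptiness along terminal maps. The only mild subtlety is to remember that the second alternative is a real one (the everywhere-empty functor trivially satisfies the surjectivity hypothesis, since $\emptyset\to\emptyset$ is surjective), so the dichotomy in the statement cannot be strengthened to the single conclusion $F(\boldsymbol{1})\cong\boldsymbol{1}$.
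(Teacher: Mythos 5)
Your proposal is correct and follows essentially the same route as the paper: both arguments rest on the observation that $\pi_1=\pi_2$ for the product $\boldsymbol{1}\times\boldsymbol{1}$, so that the canonical map into $F(\boldsymbol{1})\times F(\boldsymbol{1})$ is (up to identification) the diagonal, whose surjectivity forces $|F(\boldsymbol{1})|\le 1$, after which emptiness propagates along $F!_X$ exactly as in the paper. The only cosmetic difference is that the paper cancels the epi $\delta$ to deduce $\eta_1=\eta_2$ and then concludes $a=b$, whereas you phrase the same step directly as ``a surjective diagonal forces a subsingleton.''
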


\begin{proof}
For the projections $\pi_{1},\pi_{2}:\boldsymbol{1}\times\boldsymbol{1}\to\boldsymbol{1}$
we have $\pi_{1}=\pi_{2},$ since $\boldsymbol{1}$ is a terminal
object, hence also $F\pi_{1}=F\pi_{2}$. Let $\eta_{1},\eta_{2}$
be the projections from the product $F(\boldsymbol{1})\times F(\boldsymbol{1})$
to its components. Then 
\[
\eta_{1}\circ(F\pi_{1},F\pi_{2})=F\pi_{1}=F\pi_{2}=\eta_{2}\circ(F\pi_{1},F\pi_{2}).
\]
By assumption, $\delta=(F\pi_{1},F\pi_{2})$ is epi, so $\eta_{1}=\eta_{2}$.
For arbitrary $a,b\in F(\boldsymbol{1})$ then $(a,b)\in F(\boldsymbol{1})\times F(\boldsymbol{1}),$
so 
\[
a=\eta_{1}(a,b)=\eta_{2}(a,b)=b.
\]
So $F(\boldsymbol{1})$ either has just one element, or $F(\boldsymbol{1})=\emptyset$.
In the latter case, for each set $X$ the map $!_{X}:X\to1$ should
yield a map $F!_{X}:F(X)\to F(\boldsymbol{1}),$ so $F(\boldsymbol{1})=\emptyset$
implies $F(X)=\emptyset.$
\end{proof}
Next, recall some elementary categorical notions.
\begin{defn}
Given objects $A_{1},A_{2}$ in a category $\mathcal{C}$, a \emph{product
of} $A_{1}$ \emph{and} $A_{2}$ is an object $P$ together with morphisms
$p_{i}:P\to A_{i}$, such that for any ``competitor'', i.e. for
any object $Q$ with morphisms $q_{i}:Q\to A_{i}$, there exists\emph{
a unique }morphism $d:Q\to P$, such that $q_{i}=p_{i}\circ\delta$
for $i=1,2.$ Products, if they exist, are unique up to isomorphism
and are commonly written $A_{1}\times A_{2}$.

Similarly, given morphisms $f_{1}:X_{1}\to Y$ and $f_{2}:X_{2}\to Y$
with common codomain $Y,$ their \emph{pullback} is defined to be
a pair of maps $p_{1}:P\to X_{1}$ and $p_{2}:P\to X_{2}$ with common
domain $P$ such that 
\[
f_{1}\circ p_{1}=f_{2}\circ p_{2}
\]
and for each ``competitor'', i.e. each object $Q$ with morphisms
$q_{1}:Q\to X_{1}$ and $q_{2}:Q\to X_{2}$ also satisfying $f_{1}\circ q_{1}=f_{2}\circ q_{2}$
there exists \emph{a unique }morphism $d:Q\to P$ so that $p_{i}\circ d=q_{i}$
for $i=1,2.$
\[
\xyR{1pc}\xyC{2pc}\xymatrix{Q\ar@/{}_{1pc}/[ddr]_{q_{2}}\ar@/{}^{1pc}/[drr]^{q_{1}}\ar@{.>}[dr]|-{d} &  &  &  & Q\ar@/{}_{1pc}/[ddr]_{q_{2}}\ar@/{}^{1pc}/[drr]^{q_{1}}\ar@{.>}[dr]|-{d}\\
 & P\ar[d]^{p_{2}}\ar[r]_{p_{1}} & A_{1} & \text{resp.} &  & P\ar[d]^{p_{2}}\ar[r]_{p_{1}} & A_{1}\ar[d]_{f_{1}}\\
 & A_{2} &  &  &  & A_{2}\ar[r]^{f_{2}} & B
}
\]
In both definitions, if we drop the uniqueness requirement, we obtain
the definition of \emph{weak product}, resp. \emph{weak pullback}.
\end{defn}

Notice that in case when there exists a terminal object $\boldsymbol{1}$,
the product of $A_{1}$ with $A_{2}$ is the same as the pullback
of the terminal morphisms $!_{A_{i}}:A_{i}\to\boldsymbol{1}$.

Weak products (weak pullbacks) arise from right invertible morphisms
into products (pullbacks):
\begin{lem}
\label{lem:weak pullback}If $(P,p_{1},p_{2})$ is a product (resp.
pullback), then $(W,w_{1},w_{2})$ is a weak product (resp. weak pullback)
if and only if there is a right invertible $w:W\to P$ such that $w_{i}=p_{i}\circ w$.
\end{lem}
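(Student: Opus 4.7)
The plan is to prove both directions by directly exploiting the universal property of the strict product/pullback $P$ and the (non-unique) mediating property of the weak version. The product and pullback cases will be handled in parallel, since the only difference is what family of cones/diagrams we consider as ``competitors''; I will write the argument for products and remark that the pullback case is identical with the commutation condition $f_1\circ q_1 = f_2\circ q_2$ added to all cones.

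For the ``if'' direction, assume $w:W\to P$ is right-invertible with $p_i\circ w = w_i$, and let $s:P\to W$ satisfy $w\circ s = \mathrm{id}_P$. Given any competitor $(Q,q_1,q_2)$, the universal property of $P$ produces a unique $d:Q\to P$ with $p_i\circ d = q_i$. I would then take $d' := s\circ d : Q \to W$ and compute $w_i\circ d' = p_i\circ w\circ s\circ d = p_i\circ d = q_i$, which furnishes the mediator required for weakness. No uniqueness is claimed, as expected.

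For the ``only if'' direction, assume $(W,w_1,w_2)$ is a weak product. The universal property of $P$, applied to the cone $(W,w_1,w_2)$, yields a unique $w:W\to P$ with $p_i\circ w = w_i$. To produce a right inverse, I would apply weakness of $W$ to the cone $(P,p_1,p_2)$ itself: this hands us some $s:P\to W$ with $w_i\circ s = p_i$, i.e.\ $p_i\circ w\circ s = p_i$. Now invoking the \emph{uniqueness} half of the universal property of $P$ (applied to the cone $(P,p_1,p_2)$, for which $\mathrm{id}_P$ is clearly a mediator) forces $w\circ s = \mathrm{id}_P$. Thus $w$ is split epi.

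The main (and only) subtlety is this last step: weakness of $W$ is not strong enough on its own to recover a section of $w$, and one genuinely needs the strict uniqueness from $P$ to upgrade the equation $p_i\circ(w\circ s) = p_i$ to $w\circ s = \mathrm{id}_P$. Everything else is formal diagram-chasing. The pullback variant goes through verbatim once one checks that $s\circ d$ and $s$ respect the pullback condition, which is immediate because $p_1, p_2$ (and hence $w_1 = p_1\circ w$, $w_2 = p_2\circ w$) already do.
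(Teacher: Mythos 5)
Your proof is correct and follows essentially the same route as the paper: the ``if'' direction composes the section with the mediator into $P$, and the ``only if'' direction obtains $w$ and a candidate section from mutual competitorship and then uses the strict uniqueness at $P$ to conclude $w\circ s=\mathrm{id}_P$. No gaps.
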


\begin{proof}
If $w$ has a right inverse $e$, and $(Q,q_{1},q_{2})$ is a competitor
to $W,$ then it is also a competitor to $P,$ hence there is a morphism
$d:Q\to P$ with $q_{i}=p_{i}\circ d$. Then $e\circ d$ is the required
morphism to $W$. Indeed, 
\[
w_{i}\circ(e\circ d)=p_{i}\circ w\circ e\circ d=p_{i}\circ d=q_{i}.
\]

Conversely, assume that $(W,w_{1},w_{2})$ is a weak product, then
both $W$ and $P$ are competitors to each other, yielding both a
morphism $w:W\to P$ with $w_{i}=p_{i}\circ w$ and a morphism $e:P\to W$
with $p_{i}=w_{i}\circ e$.

Now $(P,p_{1},p_{2})$ is also acompetitor to itself, yet both $p_{i}\circ(w\circ e)=p_{i}$
and $p_{i}\circ id_{P}=p_{i}$ for $i=1,2$. By uniqueness it follows,
$w\circ e=id_{P},$ so $w$ is indeed right invertible. (The same
proof works for the case of weak pullbacks).
\end{proof}
\begin{defn}
Let $F:\mathcal{C}\to\mathcal{D}$ be a functor. We say that $F$
\emph{weakly preserves} products (pullbacks) if whenever $(P,p_{1},p_{2})$
is a product (pullback), then its image $(F(P),Fp_{1},Fp_{2})$ is
a weak product (weak pullback).
\end{defn}

It is well known, that a functor weakly preserves a limit $L$, if
and only it preserves weak limits, see e.g. \citep{Gum98}. By the
axiom of choice, surjective maps are right invertible, so regarding
(\ref{eq:surjective}) or its more general formulation (\ref{eq:delta}),
we now arrive at the following relevant observation:
\begin{lem}
\label{lem:weak product preservation}The canonical map $\delta$
in (\ref{eq:delta}) is epi if and only if $F$ weakly preserves the
product $(A_{1}\times A_{2},\pi_{1},\pi_{2})$.
\end{lem}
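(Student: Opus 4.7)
The plan is to deduce this directly from Lemma \ref{lem:weak pullback} together with the set-theoretic fact that, in $Set$, a morphism is epi if and only if it is right-invertible (one direction being trivial, the other using the axiom of choice).

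First, I would record the observation that $\delta = (F\pi_1, F\pi_2)$ is, by the universal property of the product $F(A_1)\times F(A_2)$ in $Set$, the \emph{unique} morphism $w : F(A_1\times A_2) \to F(A_1)\times F(A_2)$ satisfying $\eta_i \circ w = F\pi_i$ for $i=1,2$. So any ``mediating'' map of the shape required in Lemma \ref{lem:weak pullback} must coincide with $\delta$.

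Next, I would apply Lemma \ref{lem:weak pullback} with $(P,p_1,p_2) := (F(A_1)\times F(A_2), \eta_1, \eta_2)$ (which is a product in $Set$) and $(W,w_1,w_2) := (F(A_1\times A_2), F\pi_1, F\pi_2)$. According to that lemma, $(W,w_1,w_2)$ is a weak product precisely when there exists a right-invertible $w:W\to P$ with $w_i = p_i\circ w$. By the uniqueness noted above, such a $w$ can only be $\delta$, so weak preservation of the product $A_1\times A_2$ by $F$ is equivalent to $\delta$ being right-invertible. Invoking the axiom of choice, this is in turn equivalent to $\delta$ being epi, which is what we wanted.

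I do not anticipate a real obstacle: the argument is essentially bookkeeping, chaining the universal property of products in $Set$, the already-proven Lemma \ref{lem:weak pullback}, and the AC-characterization of surjections. The only point that requires a moment of care is identifying the abstract mediator supplied by Lemma \ref{lem:weak pullback} with the concrete canonical map $\delta$, and this is immediate from uniqueness in the product's universal property.
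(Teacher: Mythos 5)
Your proposal is correct and follows exactly the route the paper intends: it combines Lemma \ref{lem:weak pullback} with the axiom-of-choice equivalence between epimorphisms and right-invertible maps in $Set$, identifying the mediating morphism with $\delta$ via the uniqueness clause of the product's universal property. The paper leaves this lemma without a displayed proof, justifying it by precisely these remarks in the preceding paragraph, so you have simply written out the bookkeeping it omits.
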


Whereas the above mentioned result of \citep{modularityTest}, in
which the monad $F$ is the free-algebra-functor $F_{V}$, served
a purely universal algebraic purpose, it also has an interesting coalgebraic
interpretation. It is well known, that coalgebraic properties of classes
of $F$-coalgebras are to a large degree determined by \emph{weak
pullback preservation} properties of the functor $F$, which serves
as a \emph{type} or \emph{signature} for a class $Coalg_{F}$ of coalgebras.
Prominent structure theoretic properties can be derived from the assumptions
that $F$ weakly preserves pullbacks of preimages, kernel pairs or
both, see e.g. \citep{Gum99b},\citep{Gum98},\citep{GS99a},\citep{GS01},\citep{GS05},
\citep{Rut96}. Here we add one more property to this list: preservation
of pullbacks of constant maps.
\begin{thm}
\label{thm:equivalence}Let $F$ be a nontrivial functor. Then the
following are equivalent:
\begin{enumerate}
\item $F$ has no constant and weakly preserves products
\item $F$ is connected and weakly preserves pullbacks of constant maps.
\end{enumerate}
\end{thm}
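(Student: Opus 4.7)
The plan is to prove each implication separately, with direction (2)$\Rightarrow$(1) requiring the more delicate argument.

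For (1)$\Rightarrow$(2), since $F$ is nontrivial and weakly preserves products, applying Lemma \ref{lem:F1=00003D1} to $A_{1}=A_{2}=\boldsymbol{1}$ immediately gives $F(\boldsymbol{1})\cong\boldsymbol{1}$, so $F$ is connected. Having no constant, together with connectedness, forces (by Lemma \ref{lem:constant subfunctor}) the unique $\iota:Id\to F$ supplied by the Yoneda correspondence to be injective; it also forces $F(\emptyset)=\emptyset$, since any $u\in F(\emptyset)$ would propagate via $F(\emptyset_{X})(u)$ to a constant $\gamma_{X}$ natural at every nonempty $X$. Weak preservation of the pullback of two constants $c_{a}:X\to Z$, $c_{b}:Y\to Z$ then splits into two cases. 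If $a=b$, the Set-pullback is $X\times Y$ with the canonical projections; by Lemma \ref{lem:F(constant)}, $Fc_{a}=Fc_{b}$ has constant value $\iota_{Z}(a)$, so the Set-pullback of $Fc_{a},Fc_{b}$ is the full product $F(X)\times F(Y)$, and weak product preservation supplies the required section. If $a\ne b$, the Set-pullback is $\emptyset=F(\emptyset)$, while injectivity of $\iota_{Z}$ makes $Fc_{a}$ and $Fc_{b}$ distinct constant maps whose Set-pullback is again $\emptyset$, so weak preservation holds vacuously.

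For (2)$\Rightarrow$(1), weak product preservation follows almost immediately: in $Set$, the product $X\times Y$ is the pullback of the terminal maps $!_{X},!_{Y}:X,Y\to\boldsymbol{1}$, and these are constants because $\boldsymbol{1}$ is a singleton. Connectedness gives $F(\boldsymbol{1})\cong\boldsymbol{1}$, so the Set-pullback of $F!_{X},F!_{Y}$ is precisely $F(X)\times F(Y)$; the assumed weak pullback preservation then supplies a right-inverse to the canonical map $\delta$ (by Lemma \ref{lem:weak pullback}), proving $\delta$ epi.

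The hard part is deducing the no-constant conclusion. Arguing by contradiction, suppose $F$ possesses a constant $\gamma$. Naturality of $\gamma$ against the constant maps $\bar{y}:\boldsymbol{1}\to Y$, combined with Lemma \ref{lem:F(constant)}, gives $\gamma_{Y}=F\bar{y}(\gamma_{\boldsymbol{1}})=\iota_{Y}(y)$ for every $y\in Y$, so $\iota_{Y}$ is constant on every nonempty $Y$, i.e., $\iota$ is non-injective. Pick $a\ne b$ in $Z=\boldsymbol{2}$ with $\iota_{Z}(a)=\iota_{Z}(b)$, and consider the pullback of $c_{a}:X\to Z$, $c_{b}:Y\to Z$ for arbitrary nonempty $X,Y$: its Set-pullback is $\emptyset$, mapped by $F$ to $F(\emptyset)$, whereas the Set-pullback of the now equal constants $Fc_{a}=Fc_{b}$ (by Lemma \ref{lem:F(constant)}) is the full product $F(X)\times F(Y)$. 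The weak preservation hypothesis demands a section $F(X)\times F(Y)\to F(\emptyset)$ factoring through $(F\emptyset_{X},F\emptyset_{Y})$, yet the image of that pair collapses to the single point $(\gamma_{X},\gamma_{Y})$; this forces $F(X)$ and $F(Y)$ to be singletons for every nonempty $X,Y$, collapsing $F$ to the terminal functor on nonempty inputs and contradicting non-triviality.
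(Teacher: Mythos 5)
Your argument for (1)$\Rightarrow$(2) and for the product-preservation half of (2)$\Rightarrow$(1) coincides with the paper's proof step for step: Lemma \ref{lem:F1=00003D1} for connectedness, Lemma \ref{lem:constant subfunctor} for injectivity of $\iota$, the case split $y_{1}=y_{2}$ versus $y_{1}\ne y_{2}$, and the identification of the product with the pullback of the terminal maps over $\boldsymbol{1}$. All of that is correct. You also did something the paper does not: you noticed that (2)$\Rightarrow$(1) requires deriving ``$F$ has no constant,'' a clause the paper's own proof silently drops, and you supplied an argument for it. That argument is sound up to its last line: from a constant $\gamma$ you correctly deduce that each $\iota_{Y}$ is constant, and from weak preservation of the empty pullback of $c_{a},c_{b}$ with $a\ne b$ you correctly force $F(\emptyset)\ne\emptyset$ and $F(X)\cong\boldsymbol{1}$ for every nonempty $X$.

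The gap is the final step, ``contradicting non-triviality.'' The only notion of \emph{trivial} the paper supplies (in Lemma \ref{lem:F1=00003D1}) is the functor with constant value $\emptyset$; the constant functor $\mathcal{C}_{\boldsymbol{1}}$ is nontrivial in that sense, yet it is connected, weakly preserves every pullback (it sends any cospan to $id_{\boldsymbol{1}},id_{\boldsymbol{1}}$, whose pullback is again $\boldsymbol{1}$), and visibly possesses a constant. So under the stated hypothesis the implication from (2) to ``no constant'' is simply false, and no argument can close the hole: your own computation, pushed one step further, exhibits $\mathcal{C}_{\boldsymbol{1}}$ as a counterexample to the literal statement. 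What you actually proved --- that a functor satisfying (2) and possessing a constant must agree with $\mathcal{C}_{\boldsymbol{1}}$ on all nonempty sets --- is the correct repair: the equivalence holds once ``nontrivial'' is strengthened to exclude such essentially constant functors as well as the constantly empty one, and your argument is precisely the piece the paper's proof is missing.
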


\begin{proof}
If $F$ is nontrivial and weakly preserves the product $\boldsymbol{1}\times\boldsymbol{1}\cong\boldsymbol{1}$,
then $F$ is connected as a consequence of lemma \ref{lem:F1=00003D1}.
Since $F$ has no constants, $F(\emptyset)=\emptyset$ and moreover
lemma \ref{lem:constant subfunctor} provides $Id$ as a subfunctor
of $F$. Thus we obtain a natural transformation $\iota:Id\to F$
which is injective in each component.

Let now $c_{y_{i}}^{X_{i}}:X_{i}\to Y$ for $i=1,2$ be constant maps
with $y_{i}\in Y$. Applying $F$, lemma \ref{lem:F(constant)} yields
$Fc_{y_{i}}^{X_{i}}=c_{\iota_{Y}(y_{i})}^{F(X_{i})}$ for $i=1,2.$

If $y_{1}=y_{2}$ then the pullback of the $c_{y_{i}}^{X_{i}}$ is
simply $(X_{1}\times X_{2},\pi_{1},\pi_{2})$. The $Fc_{y_{i}}^{X_{i}}$
are constant maps with the same target value $\iota_{Y}(y_{1})=\iota_{Y}(y_{2}),$
so their pullback is the product $F(X_{1})\times F(X_{2})$ with canonical
projections $\eta_{i}:F(X_{1})\times F(X_{2})\to F(X_{i})$. By assumption,
$F$ weakly preserves products, which gives us a surjective canonical
map $\delta:F(X_{1}\times X_{2})\to F(X_{1})\times F(X_{2})$ with
$F\pi_{i}=\eta_{i}\circ\delta$, so lemma \ref{lem:weak pullback}
assures that $(F(X_{1}\times X_{2}),F\pi_{1},F\pi_{2})$ is a weak
pullback of the $Fc_{y_{i}}^{X_{i}}.$

If $y_{1}\ne y_{2}$ then the pullback of the $c_{y_{i}}^{X_{i}}$
is $(\emptyset,\emptyset_{X_{1}},\emptyset_{X_{2}})$, the empty set
$\emptyset$ with empty mappings $\emptyset_{X_{i}}:\emptyset\to X_{i}$.
Since $\iota_{Y}$ is injective, the $Fc_{y_{1}}$ are constant mappings,
also with disjoint images, so their pullback is $(\emptyset,\emptyset_{F(X_{1})},\emptyset_{F(X_{2})}).$
This is the same we would obtain by applying $F$ to the pullback
of the $c_{y_{i}}$, taking into account that $F(\emptyset)=\emptyset$.

For the reverse direction, suppose that $F$ is connected and weakly
preserves pullbacks of constant maps. The product $(X_{1}\times X_{2},\pi_{1},\pi_{2})$
is at the same time the pullback of the terminal maps $!_{X_{i}}:X_{i}\to\boldsymbol{1}$.
Applying $F$ and considering that $F(\boldsymbol{1})\cong\boldsymbol{1}$,
we see that the $F!_{X_{i}}$ are also terminal maps, so their pullback
is $(F(X_{1})\times F(X_{2}),\eta_{1},\eta_{2})$. Thus, if $F$ weakly
preserves the pullback of the $!_{X_{i}}$ we must have that $(F(X_{1}\times X_{2}),F\pi_{1},F\pi_{2})$
is a weak pullback of the $F!_{X_{i}}$ which by lemma \ref{lem:weak pullback}
means that there exists a surjective map $\delta:F(X_{1}\times X_{2})\to F(X_{1})\times F(X_{2})$
with $\eta_{i}\circ\delta=F\pi_{i}$.
\end{proof}
The following example shows that the requirement that $"F$ has no
constants'' is essential in theorem \ref{thm:equivalence}.
\begin{example}
Consider the functor $T$ with $T(X)=X^{2}/\Delta$ where $\Delta$
is the equivalence relation on $X^{2}$ identifying any two elements
in the diagonal of $X^{2}.$ For $x_{1},x_{2}\in X$, we denote the
elements of $X^{2}/\Delta$ by $(x_{1},x_{2})$ if $x_{1}\ne x_{2}$
and by $\bot$ otherwise. On maps $f:X\to Y$ the functor $T$ is
defined as $(Tf)(\bot)=\bot$ and $(Tf)(x_{1},x_{2})=\begin{cases}
\bot & f(x_{1})=f(x_{2})\\
(f(x_{1}),f(x_{2})) & else
\end{cases}$. Then $T$ is a functor and the projection $\pi_{\Delta}:X^{2}\to X^{2}/\Delta$
is a natural transformation. Even though $T(\emptyset)=\emptyset$,
the functor does have a constant, $\bot$.
\end{example}

The map $\delta=(T\pi_{1},T\pi_{2}):T(X\times Y)\to T(X)\times T(Y)$
is surjective: If $X=\emptyset$ or $Y=\emptyset$ this is trivial,
otherwise fix some $x\in X$ and $y\in Y$. $((x_{1},x_{2}),(y_{1},y_{2}))\in T(X)\times T(Y)$
has preimage $((x_{1},y_{1}),(x_{2},y_{2})),$ next $((x_{1},x_{2}),\bot)$
resp. $(\bot,(y_{1},y_{2}))$ have preimages $((x_{1},y)(x_{2},y))$
resp. $((x,y_{1}),(x,y_{2}))$, finally $(\bot,\bot)$ has preimage
$\bot$. Thus $T$ weakly preserves products.

To see that $T$ does not weakly preserve pullbacks of constant maps,
consider $c_{0}^{X},c_{1}^{X}:X\to\{0,1\}$ whose pullback is $\emptyset$.
But $T(c_{0}^{X})=T(c_{1}^{X})=c_{\bot}^{T(X)}$ and their pullback
is $T(X)\times T(X).$ Clearly there is no way to find a surjective
map from $T(\emptyset)=\emptyset$ to $T(X)\times T(Y)$ as would
be required by lemma \ref{lem:weak pullback}.

\section{Proof of the main theorem}

We are finally turning to the proof of theorem \ref{thm:main}, verifying
the surjectivity of $\delta=(F\pi_{1},F\pi_{2})$ when $(F,\iota,\mu)$
is a monad. Thus given $(p,q)\in F(A_{1})\times F(A_{2}),$ we are
required to find an element $t\in F(A_{1}\times A_{2})$ such that
$(F\pi_{1})(t)=p$ and $(F\pi_{2})(t)=q.$

For each $a\in A_{1}$ we define a map $\sigma_{a}:A_{2}\to A_{1}\times A_{2}$
by 
\[
\sigma_{a}(b):=(a,b),
\]
next we define $\tau:A_{1}\to F(A_{1}\times A_{2})$ by 
\[
\tau(a):=(F\sigma_{a})(q).
\]
The following picture gives an overview, where the lower squares commute
due to the fact that $\mu$ is a natural transformation, 
\[
\xyC{3pc}\xyR{3pc}\xymatrix{A_{1}\ar[dr]_{\tau}\ar@{..>}[d]_{\iota_{A_{1}}}\ar@{..>}[drr]\sp(0.7){c_{q}^{A_{1}}} & A_{1}\times A_{2}\ar@<-.15pc>[r]_{\pi_{2}}\ar[l]_{\pi_{1}} & A_{2}\ar@/^{-.3pc}/@<-.25pc>[l]_{\sigma_{a}}\\
F(A_{1}) & F(A_{1}\times A_{2})\ar[r]_{F\pi_{2}}\ar[l]^{F\pi_{1}} & F(A_{2})\\
F(F(A_{1}))\ar[u]_{\mu_{A_{1}}} & F(F(A_{1}\times A_{2}))\ar[l]^{FF\pi_{1}}\ar[r]_{FF\pi_{2}}\ar[u]^{\mu_{A_{1}\times A_{2}}} & F(F(A_{2}))\ar[u]_{\mu_{A_{2}}}
}
.
\]
and the commutativities involving the dotted arrows will be established
in the following auxiliary lemma:
\begin{lem}
\label{lem:aux}~ 

\begin{itemize}
\item $F\pi_{1}\circ\tau=\iota_{A_{1}}$
\item \textup{$F\pi_{2}\circ\tau=c_{q}^{A_{1}}$}
\end{itemize}
\end{lem}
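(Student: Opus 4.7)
The plan is to verify both identities pointwise by evaluating each composition at an arbitrary $a \in A_1$ and collapsing the resulting expression using functoriality. Unwinding the definition of $\tau$, we have for either index $i \in \{1,2\}$ that
\[
(F\pi_i \circ \tau)(a) \;=\; F\pi_i\bigl((F\sigma_a)(q)\bigr) \;=\; F(\pi_i \circ \sigma_a)(q),
\]
so everything reduces to analysing the set-map $\pi_i \circ \sigma_a \colon A_2 \to A_i$.

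For the first bullet, I would observe that $\pi_1 \circ \sigma_a$ sends every $b \in A_2$ to $a$, so it is precisely the constant map $c_a^{A_2}$. Since we are inside the proof of Theorem \ref{thm:main}, we may assume $F(\boldsymbol{1}) \cong \boldsymbol{1}$, i.e.\ $F$ is connected, and the monad unit $\iota$ plays the role of the natural transformation required by Lemma \ref{lem:F(constant)}. That lemma therefore yields $F(c_a^{A_2}) = c_{\iota_{A_1}(a)}^{F(A_2)}$, and evaluating at $q$ gives $\iota_{A_1}(a)$. Since $a$ was arbitrary, this proves $F\pi_1 \circ \tau = \iota_{A_1}$.

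For the second bullet, the corresponding factor is $\pi_2 \circ \sigma_a$, which sends each $b \in A_2$ to $b$ itself and is hence $id_{A_2}$. Functoriality gives $F(\pi_2 \circ \sigma_a) = id_{F(A_2)}$, so its value at $q$ is $q$, independently of $a$. Hence $F\pi_2 \circ \tau$ is the constant map $c_q^{A_1}$, as required.

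There is no serious obstacle here; the argument is essentially a bookkeeping exercise in functoriality. The only point that deserves attention is recognising that Lemma \ref{lem:F(constant)} is applicable: this needs both the connectedness of $F$ (guaranteed by the hypothesis of Theorem \ref{thm:main}) and a natural transformation $Id \to F$ (supplied by the monad's unit $\iota$). Once these two ingredients are in place, the two identities fall out immediately.
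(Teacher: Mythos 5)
Your proof is correct and follows essentially the same route as the paper: you reduce both composites to $F(\pi_i\circ\sigma_a)(q)$ by functoriality, identify $\pi_1\circ\sigma_a=c_a^{A_2}$ and $\pi_2\circ\sigma_a=id_{A_2}$, and apply Lemma \ref{lem:F(constant)} for the first bullet. Your explicit justification of why Lemma \ref{lem:F(constant)} applies (connectedness from the hypothesis of Theorem \ref{thm:main} plus the monad unit $\iota$) is a correct and welcome addition that the paper leaves implicit.
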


\begin{proof}
From the definition it follows that $\pi_{1}\circ\sigma_{a}=c_{a}^{A_{2}}$
and $\pi_{2}\circ\sigma_{a}=id_{A_{2}}.$ Using these, and lemma \ref{lem:F(constant)},
we calculate:

\begin{eqnarray*}
(F\pi_{1}\circ\tau)(a) & = & (F\pi_{1})(\tau(a))\\
 & = & (F\pi_{1})((F\sigma_{a})(q))\\
 & = & ((F\pi_{1})\circ F\sigma_{a}))(q)\\
 & = & F(\pi_{1}\circ\sigma_{a})(q)\\
 & = & (Fc_{a}^{A_{2}})(q)\\
 & = & c_{\iota_{A_{1}}(a)}^{F(A_{2})}(q)\\
 & = & \iota_{A_{1}}(a)
\end{eqnarray*}
 and similarly
\begin{eqnarray*}
(F\pi_{2}\circ\tau)(a) & = & F(\pi_{2})((F\sigma_{a})(q))\\
 & = & F(\pi_{2}\circ\sigma_{a})(q)\\
 & = & F(id_{A_{2}})(q)\\
 & = & id_{F(A_{2})}(q)\\
 & = & q
\end{eqnarray*}
whence $(F\pi_{2}\circ\tau)$ is the constant map $c_{q}^{A_{1}}:A_{1}\to F(A_{2})$
.
\end{proof}
With these lemmas in place, we can finish the proof of theorem \ref{thm:main}.
We set 
\[
t:=(\mu_{A_{1}\times A_{2}}\circ F\tau)(p)
\]
and claim:
\begin{eqnarray}
(F\pi_{1})(t) & = & p\label{eq:1}\\
(F\pi_{2})(t) & = & q.\label{eq:2}
\end{eqnarray}
In order to show \ref{eq:1}, we calculate, using naturality of $\mu$,
for $i=1,2:$ 
\begin{eqnarray*}
(F\pi_{i})(t) & = & (F\pi_{i})((\mu_{A_{1}\times A_{2}}\circ F\tau)(p))\\
 & = & (F\pi_{i}\circ\mu_{A_{1}\times A_{2}}\circ F\tau)(p)\\
 & = & (\mu_{A_{i}}\circ FF\pi_{i}\circ F\tau)(p)\\
 & = & (\mu_{A_{i}}\circ F(F\pi_{i}\circ\tau))(p).
\end{eqnarray*}
Then for and for $i=1$ we continue, using lemma \ref{lem:aux} and
the first monad law: 
\begin{eqnarray*}
(\mu_{A_{1}}\circ F(F\pi_{1}\circ\tau))(p) & = & (\mu_{A_{1}}\circ F\iota_{A_{1}})(p)\\
 & = & id_{F(A_{1})}(p)\\
 & = & p,
\end{eqnarray*}
whereas for $i=2$ we obtain, using lemmas \ref{lem:aux} and \ref{lem:F(constant)}
as well as the second monad law:

\begin{eqnarray*}
(\mu_{A_{2}}\circ F(F\pi_{2}\circ\tau))(p) & = & (\mu_{A_{2}}\circ F(c_{q}^{A_{1}}))(p)\\
 & = & (\mu_{A_{2}}\circ c_{\iota_{F(A_{2})}(q)}^{F(A_{1})})(p)\\
 & = & \mu_{A_{2}}(\iota_{F(A_{2})}(q))\\
 & = & (\mu_{A_{2}}\circ\iota_{F(A_{2})})(q)\\
 & = & q.
\end{eqnarray*}

\begin{cor}
Let $\alpha=Ker\,\pi_{1}$ and $\beta=Ker\,\pi_{2},$ then 
\[
F(A\times B)/\alpha\wedge\beta\cong F(A)\times F(B).
\]
\end{cor}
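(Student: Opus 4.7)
The plan is to read this corollary as the first isomorphism theorem applied to the map $\delta = (F\pi_1, F\pi_2)$ from Theorem \ref{thm:main}. The first step is to verify the kernel identity $Ker\,\delta = \alpha \wedge \beta$: by definition, $\delta(u) = \delta(v)$ holds if and only if both $F\pi_1(u) = F\pi_1(v)$ and $F\pi_2(u) = F\pi_2(v)$, which says exactly that $(u,v) \in Ker(F\pi_1) \cap Ker(F\pi_2) = \alpha \wedge \beta$.

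With this identification in hand, Theorem \ref{thm:main} supplies the surjectivity of $\delta$ (the hypothesis $F(\boldsymbol{1}) \cong \boldsymbol{1}$ needed for the theorem to apply is the tacit assumption of the corollary; without it $F$ would be trivial and the statement vacuous). Factoring a surjection in $Set$ through its kernel quotient yields a bijection $F(A \times B)/\alpha \wedge \beta \to F(A) \times F(B)$, which is precisely the isomorphism claimed. There is essentially no obstacle to speak of: the entire substance of the statement is the surjectivity of $\delta$ already established in the proof of Theorem \ref{thm:main}, and the corollary merely repackages it as an isomorphism with a quotient.
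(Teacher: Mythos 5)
Your proof is correct and is exactly the argument the paper intends (the corollary is left without an explicit proof, but the identity $\ker\delta=\alpha\wedge\beta$ is already invoked in the introduction, and the isomorphism is just the surjectivity of $\delta$ from Theorem \ref{thm:main} combined with the first isomorphism theorem). Your reading of $\alpha,\beta$ as the kernels of $F\pi_{1},F\pi_{2}$ and your remark about the tacit connectedness hypothesis are both the right way to interpret the statement.
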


\bibliographystyle{spmpsci}
\bibliography{infobib}

\end{document}